\documentclass[10pt]{amsart}

\usepackage{hyperref}
 \hypersetup{ colorlinks=true, linkcolor=blue, filecolor=magenta, urlcolor=cyan, }

\newcommand*{\Scale}[2][4]{\scalebox{#1}{$#2$}}%

\usepackage{hyperref}
 \hypersetup{ colorlinks=true, linkcolor=blue, filecolor=magenta, urlcolor=cyan, }

\usepackage{graphics}
\usepackage{graphicx}
\usepackage{epsfig}
\usepackage{amsmath}
\usepackage{graphics}
\usepackage{graphicx}
\usepackage{epstopdf}
\usepackage{multirow}
\usepackage{booktabs}
\usepackage{ifpdf}
\usepackage{color}
\usepackage{cite}
\usepackage{amssymb}
\usepackage{amsthm}
\usepackage{amsmath}
\usepackage{matlab-prettifier}

\theoremstyle{plain}
\newtheorem{ntn}{Notations}[section]

\newtheorem{dfn}{Definition}[section]
\newtheorem{thm}{Theorem}[section]
\newtheorem{cor}{Corollary}[section]

\usepackage{lineno}
\numberwithin{equation}{section}

\begin{document}

\bibliographystyle{plain}

\title{A New Formula of the Determinant Tensor with Symmetries}

\author{Jeong-Hoon Ju}
\email{jjh793012@naver.com}

\author{Taehyeong Kim}
\email{th\_kim@pusan.ac.kr}

\author{Yeongrak Kim}
\email{yeongrak.kim@pusan.ac.kr}

\address{Department of Mathematics, Pusan National University, 
2 Busandaehak-ro 63beon-gil, Geumjeung-gu, 46241 Busan, Republic of Korea}

\begin{abstract}
In this paper, we present a new formula of the determinant tensor $det_n$ for $n \times n$ matrices. In \cite{kim2023newdet4} , Kim, Ju, and Kim found a new formula of $4 \times 4$ determinant tensor $det_4$ which is available when the base field is not of characteristic $2$. Considering some symmetries in that formula, we found a new formula so that
\begin{equation*}
	\operatorname{Crank}(det_n) \leq \operatorname{rank}(det_n) \leq \frac{n!}{2^{\lfloor(n-2)/2 \rfloor}}
\end{equation*}
when the base field is not of characteristic $2$.
\vspace{0.5cm}
\end{abstract}

\keywords{Tensor Rank, Determinant}

\subjclass[2020] {Primary 14N07, 15A15}

\maketitle


\section{Introduction}

Let $\mathbb{K}$ be a field, let $V$ be a $\mathbb{K}$-vector space of dimension $n$, and let $\{e_1,...,e_n\}$ be a basis for the dual vector space $V^*$. The $n \times n$ determinant tensor $det_n \in (V^*)^{\otimes n}$ is defined as
\begin{equation}\label{Detn}
	det_n=\sum_{\sigma \in S_n} sgn(\sigma) ~e_{\sigma(1)} \otimes e_{\sigma(2)} \otimes \cdots \otimes e_{\sigma(n)},
\end{equation}
where $S_n$ is the symmetric group of $n$ letters and $sgn(\sigma)$ denotes the sign of the permutation $\sigma \in S_n$, as in \cite{derksen2016nuclear, krishna2018tensor, houston2023new}. Alternatively, the determinant of an $n \times n$ matrix $A$ is defined as a homogeneous polynomial of degree $d$ on the entries of $A$
\begin{equation*}
	\det(A) = \sum_{\sigma \in S_n}  \operatorname{sgn}(\sigma) \prod_{i=1}^n A_{i, \sigma(i)}.
\end{equation*}
By considering the space of $n \times n$ matrices as the space of $n$-tuples of row (column) vectors in $V$, the determinant can be regarded as an $n$-linear function with respect to columns of the matrices. This is why we can represent the determinant as a tensor $det_n$ as in (\ref{Detn}). 

The above argument identifies the determinant function as a tensor of order $n$, we consider its tensor rank. 
Roughly speaking, the tensor rank is the smallest number of decomposable tensors to represent the given tensor as a sum of decomposable tensors. The tensor rank measures the complexity of a given tensor, or equivalently, a multilinear map. It is natural to ask the tensor rank of the determinant tensor, as well as the matrix multiplication tensor or the permanent tensor, as we can find in various works including \cite{derksen2019explicit, conner2019new}.

There are two major viewpoints to see the determinant as tensors: either as a tensor living in $(\mathbb{K}^n)^{\otimes n}$, equivalently, an $n$-linear alternating function, or as a homogeneous polynomial of degree $n$ in $n^2$ variables which is the determinant of the $n \times n$ matrix $(x_{i,j})$ of independent variables $x_{1,1}, \cdots, x_{n,n}$. For the first convention, a standard measurement is the tensor rank (denoted by $rank$), and for the second convention, both the Waring rank and the Chow rank (denoted by $Wrank$ and $Crank$, respectively) are used. We will not carefully deal with the Waring rank of the determinant in this paper, and we refer to \cite{johns2022improved} for the very recent developments in the study of Waring rank of the determinant. In any case, these rank notions encode a huge amount of information about algebro-geometric meaning of the given tensor together with the development on geometric complexity theory, for instance, the notion of border rank and secant spaces of classical algebraic varieties, see \cite{landsberg2012tensors, bernardi2018hitchhiker}. For order $2$ tensors $\mathcal T \in V_1 \otimes V_2$, the tensor rank of $\mathcal T$ coincides with the rank of a corresponding matrix which is well understood. Unlikely the rank of matrices, the tensor rank of a tensor of order $d \ge 3$ is mysterious, and it is difficult to find good upper and lower bounds of the rank of a given tensor in many cases. The explicit tensor rank and the Waring rank of $\det_n$ are widely unknown for $n \ge 4$.

Let us briefly review previous studies on the rank of $\det_n$. In \cite{derksen2016nuclear}, Derksen proved that $\operatorname{rank}({\det}_3) \leq 5$ when $\operatorname{char}(\mathbb{K}) \neq 2$, by exhibiting an explicit formula 
\begin{equation}\label{Derk}
	\begin{aligned}
		{\det}_3 =\frac{1}{2}(& (e_3+e_2)\otimes (e_1-e_2) \otimes (e_1+e_2)\\
				&+(e_1+e_2)\otimes (e_2-e_3) \otimes (e_2+e_3)\\
				&+2e_2 \otimes (e_3-e_1) \otimes (e_3+e_1) \\
				&+(e_3-e_2)\otimes (e_2+e_1) \otimes (e_2-e_1) \\
				&+(e_1-e_2) \otimes (e_3+e_2) \otimes (e_3-e_2))
	\end{aligned}
\end{equation}
consisted of $5$ decomposable tensors. Using this formula and the generalized Laplace expansion, Derksen also showed that $\operatorname{rank}({\det}_n)\leq \left(5/6 \right)^{\lfloor n/3 \rfloor}\cdot n!$ if $\operatorname{char}(\mathbb{K}) \neq 2$. Furthermore, Derksen proved that $\operatorname{rank}({\det}_n) \geq {n \choose \lfloor n/2 \rfloor}$ using the flattenings. 
In \cite{derksen2019explicit}, when $\operatorname{char}(\mathbb{K}) \neq 2$, Derksen and Makam proved that $\operatorname{rank}({\det}_3)=5$ by showing that the border rank (denoted by $\operatorname{brank}$) of ${\det}_3$, which is smaller or equal to the tensor rank, is $5$. In \cite{krishna2018tensor}, Krishna and Makam gave a new formula of ${\det}_3$ without using the coefficient $\frac{1}{2}$ so that $\operatorname{rank}({\det}_n)\leq \left(5/6 \right)^{\lfloor n/3 \rfloor}\cdot n!$ is valid for arbitary field $\mathbb{K}$. They used the  Koszul flattening to show that $\operatorname{rank}({\det}_3)=5$ over arbitrary field $\mathbb{K}$, and that $17 \leq \operatorname{brank}({\det}_5) \leq \operatorname{rank}({\det}_5)$ and $62 \leq \operatorname{brank}({\det}_7) \leq \operatorname{rank}({\det}_7)$ over arbitrary $\mathbb{K}$. In the case of $det_4$, a result in \cite{derksen2015lower} and the well-known inequality $\operatorname{Wrank}(det_n) \leq 2^{n-1}\cdot \operatorname{rank}(det_n)$ tell us that $7 \leq \operatorname{rank}(det_4)$ holds over an algebraically closed field. Very recently, Houston, Goucher, and Johnston reported a significant improvement on upper bounds of $\operatorname{rank}({\det}_n)$ by the $n$-th Bell's number for arbitrary $\mathbb{K}$ using combinatorics \cite{houston2023new}. In particular, they provided an explicit formula for $\det_4$ consisting of $15$ terms so that $\operatorname{rank}({\det}_4) \leq 15$. Their formula also works over fields of positive characteristics, in particular, their formula is consisted of $12$ terms only when $\operatorname{char}(\mathbb{K})=2$. They also gave a computer-assisted proof that $\operatorname{rank}({\det}_4) = 12$ when $\operatorname{char}(\mathbb{K})=2$.
 
The authors of the paper were inspired by Derksen's formula (\ref{Derk}), and found a new formula of the $4 \times 4$ determinant tensor by using the Least Absolute Shrinkage and Selection Operator (LASSO) \cite{kim2023newdet4}:
\begin{equation}\label{newdet4}
\begin{aligned}
	{\det}_4= \frac{1}{2}(&(e_1-e_2)\otimes (e_3-e_4)\otimes (e_3+e_4)\otimes (e_1+e_2)\\
		&-(e_1-e_3)\otimes (e_2-e_4)\otimes (e_2+e_4)\otimes (e_1+e_3)\\
		&+(e_1-e_4)\otimes (e_2-e_3)\otimes (e_2+e_3)\otimes (e_1+e_4)\\
		&+(e_2-e_3)\otimes (e_1-e_4)\otimes (e_1+e_4)\otimes (e_2+e_3)\\
		&-(e_2-e_4)\otimes (e_1-e_3)\otimes (e_1+e_3)\otimes (e_2+e_4)\\
		&+(e_3-e_4)\otimes (e_1-e_2)\otimes (e_1+e_2)\otimes (e_3+e_4)\\
		&+(e_1+e_2)\otimes (e_3+e_4)\otimes (e_3-e_4)\otimes (e_1-e_2)\\
		&-(e_1+e_3)\otimes (e_2+e_4)\otimes (e_2-e_4)\otimes (e_1-e_3)\\
		&+(e_1+e_4)\otimes (e_2+e_3)\otimes (e_2-e_3)\otimes (e_1-e_4)\\
		&+(e_2+e_3)\otimes (e_1+e_4)\otimes (e_1-e_4)\otimes (e_2-e_3)\\
		&-(e_2+e_4)\otimes (e_1+e_3)\otimes (e_1-e_3)\otimes (e_2-e_4)\\
		&+(e_3+e_4)\otimes (e_1+e_2)\otimes (e_1-e_2)\otimes (e_3-e_4)).
\end{aligned}
\end{equation}
which is valid when $\operatorname{char}(\mathbb{K}) \neq 2$. Here, the $12$ summands enjoy a number of symmetries, so it sounds promising that this formula can be extended in a similar way for larger matrices. Indeed, we generalized the formula (\ref{newdet4}) for square matrices of even size $n=2k$ as follows.

\begin{thm}\label{MainThm}
	Let $n=2k$ for some positive integer $k$, and denote $[n]=\{1,2,...,n\}$. Let $I_{2k}$ be the index set
	\begin{equation*}
	\Scale[0.9]{
		I_{2k}=\{ (i_1,j_1, \cdots, i_k,j_k) \in \mathbb{Z}^{2k}~|~ i_1,j_1, \cdots ,i_k,j_k \in [2k] \text{ are pairwise different, and } i_{p}<j_{p} \text{ for each } 1 \leq p \leq k\},}
	\end{equation*}
	and let 
	\begin{equation*}
		S_{a_1, a_2, \cdots ,a_{2k}}=sgn\begin{pmatrix}
		1 & 2 & \cdots  & 2k\\
		a_1 & a_2 & \cdots  & a_{2k}
	\end{pmatrix}.
	\end{equation*}	
	Then
	\begin{equation}\label{GeneralFormula}
	\Scale[0.9]{
		\begin{aligned}
			det_{n}&=\frac{1}{2}\sum_{I_{2k}}S_{i_1,j_1,\cdots,i_k,j_k}(e_{i_1}-e_{j_1})\otimes (e_{i_2}-e_{j_2})\otimes \cdots \otimes (e_{i_k}-e_{j_k})\otimes (e_{i_k}+e_{j_k}) \otimes \cdots \otimes (e_{i_2}+e_{j_2})\otimes (e_{i_1}+e_{j_1})\\
		&+\frac{1}{2}\sum_{I_{2k}}(-1)^kS_{i_1,j_1,\cdots,i_k,j_k}(e_{i_1}+e_{j_1})\otimes (e_{i_2}+e_{j_2})\otimes \cdots \otimes (e_{i_k}+e_{j_k})\otimes (e_{i_k}-e_{j_k}) \otimes \cdots \otimes (e_{i_2}-e_{j_2})\otimes (e_{i_1}-e_{j_1}).
		\end{aligned}
		}
	\end{equation}
\end{thm}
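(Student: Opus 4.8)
The plan is to compute, for every multi-index $\mathbf{a}=(a_1,\dots,a_n)\in[n]^n$, the coefficient $c_{\mathbf{a}}$ of $e_{a_1}\otimes\cdots\otimes e_{a_n}$ in the right-hand side of (\ref{GeneralFormula}), splitting it as $c_{\mathbf{a}}=c^{(1)}_{\mathbf{a}}+c^{(2)}_{\mathbf{a}}$ according to the two sums displayed in (\ref{GeneralFormula}), and to verify that $c_{\mathbf{a}}=S_{a_1,\dots,a_n}$ when $a_1,\dots,a_n$ are pairwise distinct and $c_{\mathbf{a}}=0$ otherwise; by (\ref{Detn}) this says precisely that the right-hand side equals $det_n$. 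Expanding one summand of the first sum, indexed by $\pi=(i_1,j_1,\dots,i_k,j_k)\in I_{2k}$, the basis tensor $e_{a_1}\otimes\cdots\otimes e_{a_n}$ appears with a nonzero coefficient exactly when both $a_p$ and $a_{2k+1-p}$ lie in the block $\{i_p,j_p\}$ for every $p$, and in that case with coefficient $\tfrac12 S_\pi\prod_{p=1}^k\varepsilon_p$, where $\varepsilon_p=+1$ if $a_p=i_p$ and $\varepsilon_p=-1$ if $a_p=j_p$ (the factors $e_{i_p}+e_{j_p}$, occupying positions $k+1,\dots,2k$, always contribute $+1$). Since the $k$ blocks $\{i_p,j_p\}$ partition $[n]$, a summand can contribute to $c_{\mathbf{a}}$ only when every coincidence among the $a_q$ is between ``mirror'' positions $q$ and $2k+1-q$; in particular $c_{\mathbf{a}}=0$ once $\mathbf{a}$ has a repetition between non-mirror positions. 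Call the remaining multi-indices \emph{admissible}; such an $\mathbf{a}$ is determined by a set of $m\ge 0$ collapsed indices $p$ (those with $a_p=a_{2k+1-p}$) together with compatible values at the other positions.

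The combinatorial core is the identity $S_\pi\prod_{p=1}^k\varepsilon_p=\operatorname{sgn}(\mathbf{b})$, where $\mathbf{b}$, a permutation of $[n]$, lists for $p=1,\dots,k$ in turn the entry $a_p$ followed by the other element of $\{i_p,j_p\}$: sorting each of the $k$ consecutive two-element blocks of $\mathbf{b}$ into increasing order turns $\mathbf{b}$ into $(i_1,j_1,\dots,i_k,j_k)$ at the cost of precisely the sign $\prod_p\varepsilon_p$. When $\mathbf{a}$ is a permutation there is a unique contributing $\pi$, namely $\{i_p,j_p\}=\{a_p,a_{2k+1-p}\}$, and then $\mathbf{b}=(a_1,a_{2k},a_2,a_{2k-1},\dots,a_k,a_{k+1})$, a fixed reshuffle of $(a_1,\dots,a_n)$ whose sign is $(-1)^{k(k-1)}=1$ by a short inversion count; hence $c^{(1)}_{\mathbf{a}}=\tfrac12\operatorname{sgn}(\mathbf{a})=\tfrac12 S_{a_1,\dots,a_n}$. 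For the second sum, its $\pi$-th summand is $(-1)^k$ times the image of the first sum's $\pi$-th summand under the involution $\operatorname{rev}$ that reverses the order of the tensor factors; since $\operatorname{rev}$ sends $\mathbf{a}$ to $(a_n,\dots,a_1)$ we get $c^{(2)}_{\mathbf{a}}=(-1)^k c^{(1)}_{\operatorname{rev}(\mathbf{a})}$, and as $\operatorname{sgn}(\operatorname{rev}(\mathbf{a}))=(-1)^{k(2k-1)}\operatorname{sgn}(\mathbf{a})=(-1)^k\operatorname{sgn}(\mathbf{a})$ this gives $c^{(2)}_{\mathbf{a}}=\tfrac12 S_{a_1,\dots,a_n}$ as well, so $c_{\mathbf{a}}=S_{a_1,\dots,a_n}$ for every permutation $\mathbf{a}$.

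It remains to show $c_{\mathbf{a}}=0$ for the admissible but non-injective $\mathbf{a}$, i.e. those with $m\ge 1$ collapsed indices. For such $\mathbf{a}$ the contributing $\pi$ are parametrized by the bijections $w$ from the collapsed indices onto the $m$ symbols of $[n]$ missing from $\mathbf{a}$, and by the identity above $c^{(1)}_{\mathbf{a}}=\tfrac12\sum_w\operatorname{sgn}(\mathbf{b}_w)$; as $w$ varies, $\mathbf{b}_w$ changes only by rearranging those $m$ missing symbols among a fixed set of $m$ slots, so $\sum_w\operatorname{sgn}(\mathbf{b}_w)=\operatorname{sgn}(\mathbf{b}_{w_0})\sum_{\rho\in S_m}\operatorname{sgn}(\rho)$, which is $0$ for $m\ge 2$; the same applies to $\operatorname{rev}(\mathbf{a})$, so $c^{(1)}_{\mathbf{a}}=c^{(2)}_{\mathbf{a}}=0$. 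For $m=1$ there is a single contributing $\pi$ and $c^{(1)}_{\mathbf{a}}=\pm\tfrac12$; passing from $\mathbf{a}$ to $\operatorname{rev}(\mathbf{a})$ swaps the two entries inside each of the $k-1$ non-collapsed blocks of $\mathbf{b}$ while fixing the collapsed one, multiplying $\operatorname{sgn}(\mathbf{b})$ by $(-1)^{k-1}$, which together with the global $(-1)^k$ forces $c^{(2)}_{\mathbf{a}}=-c^{(1)}_{\mathbf{a}}$, so again $c_{\mathbf{a}}=0$. Collecting the three cases proves (\ref{GeneralFormula}).

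The one genuinely delicate step, which I would isolate as a lemma, is the sign identity $S_\pi\prod_p\varepsilon_p=\operatorname{sgn}(\mathbf{b})$ and the small computations around it --- the inversion count $(-1)^{k(k-1)}=1$ for the fixed reshuffle and the effect of $\operatorname{rev}$ on $\mathbf{b}$; with these in hand, identifying which summands reach a given $e_{a_1}\otimes\cdots\otimes e_{a_n}$ and running the case split are routine. (The coefficient $\tfrac12$ presupposes $\operatorname{char}(\mathbb{K})\neq 2$, as assumed throughout.)
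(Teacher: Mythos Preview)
Your argument is correct and follows the same overall plan as the paper --- expand both sides in the standard basis and compare coefficients term by term --- but the bookkeeping for the ``bad'' (non-permutation) monomials is organized differently, and arguably more cleanly. The paper splits those monomials according to the \emph{parity} of the number $m$ of collapsed mirror pairs: for $m$ odd it pairs each term of the first sum with its companion of the same index in the second sum (your $m=1$ case is exactly this computation), while for $m$ even it builds an ad hoc sign-reversing involution on $I_{2k}$ by swapping two of the missing symbols. You instead isolate once and for all the sign identity $S_\pi\prod_p\varepsilon_p=\operatorname{sgn}(\mathbf{b})$ and the reversal relation $c^{(2)}_{\mathbf{a}}=(-1)^k c^{(1)}_{\operatorname{rev}(\mathbf{a})}$, and then for $m\ge 2$ sum over all contributing indices simultaneously to recognize the total as a constant times $\sum_{\rho\in S_m}\operatorname{sgn}(\rho)=0$; this in fact proves the stronger statement $c^{(1)}_{\mathbf{a}}=c^{(2)}_{\mathbf{a}}=0$ separately, without any pairing. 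The price is the residual case $m=1$, which you handle with the reversal relation; the paper's parity split absorbs $m=1$ into the odd case but needs the explicit involution for even $m\ge 2$. The sign computations you flag as delicate --- the inversion count $(-1)^{k(k-1)}=1$ for the fixed reshuffle $(a_1,a_{2k},a_2,a_{2k-1},\dots)$ and the factor $(-1)^{k(2k-1)}=(-1)^k$ for $\operatorname{rev}$ --- are correct and are indeed the only places requiring care.
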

Note that the sign of the permutation $S_{a_1, \cdots, a_{2k}}$ coincides with the determinant of the corresponding permutation matrix. Consequently, we obtain 
\begin{equation*}
	\operatorname{Crank}(det_n) \leq \operatorname{rank}(det_n) \leq \frac{n!}{2^{\lfloor(n-2)/2 \rfloor}}
\end{equation*}
which greatly improves Derksen's upper bound.

The structure of the paper is as follows. In Section \ref{Sect:Preliminaries}, we review basic notions, in particular, three notions of ranks of tensors, namely, the tensor rank, the Waring rank, and the Chow rank. In Section \ref{Sect:Symmetries}, we analyze the symmetries and the sign convention of the summands appearing in the formula (\ref{newdet4}) and derive a new formula Theorem \ref{MainThm}. Using this new formula, we obtain an upper bound for the tensor rank of $\det_n$ and compare it with recent studies.

\section*{Acknowledgement}
J.-H. J. and Y. K. are supported by the Basic Science Program of the NRF of Korea [NRF-2022R1C1C1010052]. J.-H. J. participated the introductory school of AGATES in Warsaw (Poland) and thanks the organizers for providing a good research environment throughout the school. The authors thank Hyun-Min Kim for his invaluable advice, encouragement, and helpful discussions.


\section{Preliminaries}\label{Sect:Preliminaries}

We briefly review various notions for the rank of a tensor. We follow the definition of the determinant tensor in \cite{derksen2016nuclear,krishna2018tensor,houston2023new}, and mostly follow the definitions and conventions in \cite{landsberg2012tensors, bernardi2018hitchhiker}.

\begin{ntn}
Throughout this paper, we use the following notations:
\begin{itemize}
\item $\mathbb{K}$ : a field of characteristic $\neq 2$;
\item $V, V_i, W$ : finite dimensional $\mathbb{K}$-vector spaces;
\item $V^*$ : the dual vector space of $V$;
\item $[d] = \{1, 2, \cdots, d \}$ where $d$ is a positive integer.
\end{itemize}
\end{ntn}

\begin{dfn}[Multilinear map and tensor]\label{DefTensor}
	A map $\varphi:V_1 \times V_2 \times \cdots \times V_d \rightarrow W$ is said to be \emph{multilinear} if it is linear with respect to each vector space $V_i$ for $i \in [d]$. The space of such multilinear maps is identical to $V_1^* \otimes V_2^* \otimes \cdots \otimes V_d^* \otimes W$. An element $\mathcal{T} \in V_1^* \otimes V_2^* \otimes \cdots \otimes V_d^* \otimes W$ is called a \emph{tensor}, and the number of vector spaces appearing in this tensor product is called the \emph{order of $\mathcal{T}$}.
\end{dfn}

\begin{dfn}[Determinant tensor]
	Let $V$ be a $\mathbb{K}$-vector space of dimension $n$. The Cartesian $n$-product $\underbrace{V \times \cdots \times V}_{n\text{ copies}}$ is identical to the space of $n \times n$ square matrices $Mat_{n}(\mathbb{K})$ with entries in $\mathbb{K}$. We consider $\det$ as an $n$-linear function from $\underbrace{V \times \cdots \times V}_{n\text{ copies}}$ to $\mathbb{K}$. Tensoring by $\mathbb{K}$ does not change the tensor product, so we may regard it as a tensor of order $n$ (not of order $n+1$) in $\underbrace{V^* \otimes \cdots \otimes V^*}_{n \text{ copies}}$. Indeed,
	\begin{equation*}
		{\det}_n= \sum_{\sigma \in S_n} ~ sgn(\sigma)e_{\sigma(1)} \otimes e_{\sigma(2)} \otimes \cdots \otimes e_{\sigma(n)}
	\end{equation*}
where $\{e_1,...,e_n\}$ is a basis of $V^*$. For instance, (\ref{Detn}) gives $det_2=e_1 \otimes e_2 - e_2 \otimes e_1$. We have
	\begin{equation*}
		det_2\left(\begin{bmatrix}
			a_{1,1} & a_{1,2}\\ a_{2,1} & a_{2,2}
		\end{bmatrix}
		\right)=e_1\left(\begin{bmatrix}
			a_{1,1} \\ a_{2,1}
		\end{bmatrix}
		\right)e_2\left(\begin{bmatrix}
			a_{1,2} \\ a_{2,2}
		\end{bmatrix}
		\right)-e_2\left(\begin{bmatrix}
			a_{1,1} \\ a_{2,1}
		\end{bmatrix}
		\right)e_1\left(\begin{bmatrix}
			a_{1,2} \\ a_{2,2}
		\end{bmatrix}
		\right)=a_{1,1}a_{2,2}-a_{2,1}a_{1,2}
	\end{equation*}
which coincides with the usual convention. 
\end{dfn}

Since $\dim V$ is finite, the dual vector space $V^*$ is (non-canonically) isomorphic to $V$, and hence we do not really have to distinguish a vector space and its dual space when we only focus on the tensor rank and similar notions. 

\begin{dfn}[Tensor rank]
	Let $V_1,V_2,...,V_d$ be $\mathbb{K}$-vector spaces, and let $\mathcal{T} \in V_1 \otimes V_2 \otimes \cdots \otimes V_d$ be a tensor of order $d$. Then 
	\begin{equation*}
		r=\min\left\{ k ~\middle|~\mathcal{T}=\sum_{i=1}^k v_{i,1} \otimes v_{i,2} \otimes \cdots \otimes v_{i,d} \text{ where } v_{i,j}\in V_{j} \text{ for each } j \in [d] \right\}
	\end{equation*}
	is called the \emph{(tensor) rank of $\mathcal{T}$}, and denoted by $\operatorname{rank}(\mathcal{T})$. A tensor of rank $\le 1$ is said to be \emph{decomposable}.
\end{dfn}

It is clear that the rank is invariant under the change of bases, so we do not have to worry about a choice of bases. As we discussed above, $\det_n$ can be also seen as a homogeneous polynomial of degree $n$ in $n^2$ variables. There are two commonly used rank notions to measure the complexity of homogeneous polynomials, namely, the Waring rank and the Chow rank of symmetric tensors. 

\begin{dfn}[Symmetric tensor]
	Let $V$ be a vector space, and let $\mathcal{T} \in V^{\otimes d}$. 
	If $\mathcal{T} \circ \sigma=\mathcal{T}$ for every permutation $\sigma \in S_d$, then $\mathcal{T}$ is called a \emph{symmetric tensor}.
\end{dfn}

A symmetric tensor is often represented as a homogeneous polynomial (see \cite{landsberg2012tensors, bernardi2018hitchhiker}). For example, the symmetric tensor $x \otimes y \otimes z+x \otimes z \otimes y + y \otimes x \otimes z + y \otimes z \otimes x+ z \otimes x \otimes y+ z \otimes y \otimes x$ is represented by the homogeneous polynomial $6xyz$ if $\operatorname{char}(\mathbb{K}) =0$ or $> 3$. 
This observation suggests a formal description of the determinant tensor as a homogeneous polynomial in the following way, even though the determinant tensor is not a symmetric tensor: ${\det}_2=e_1 \otimes e_2-e_2 \otimes e_1 \neq e_2 \otimes e_1 - e_1 \otimes e_2$ unless $\operatorname{char}(\mathbb{K})=2$. Instead, we also consider the index of each component, which leads to an identification of $\det_n$ as a homogeneous polynomial of degree $n$ in $n^2$ independent variables. For instance, $\det_3$ can be regarded as the homogeneous polynomial $x_{1,1}x_{2,2}x_{3,3}-x_{1,1}x_{2,3}x_{3,2}-x_{1,2}x_{2,1}x_{3,3}+x_{1,2}x_{2,3}x_{3,1}+x_{1,3}x_{2,1}x_{3,2}-x_{1,3}x_{2,2}x_{3,1}$ by considering the (generic) matrix of indeterminates 
\begin{equation*}
	\begin{bmatrix} x_{1,1} & x_{1,2} & x_{1,3} \\ x_{2,1} & x_{2,2} & x_{2,3} \\ x_{3,1} & x_{3,2} & x_{3,3} \end{bmatrix}.
\end{equation*} 
For those symmetric tensors (equivalently, for homogeneous polynomials), both the Waring rank and the Chow rank are frequently considered to measure the complexity. Let $Sym(V) \simeq \mathbb{K}[x_1,...,x_n]$ denote the polynomial ring over $\mathbb{K}$ in variables $x_1,...,x_n$, and let $Sym^d (V) \simeq \mathbb{K}[x_1,...,x_n]_d$ denote the subspace of homogeneous polynomials of degree $d$.

\begin{dfn}[Waring rank]
	For $0 \neq f \in \mathbb{K}[x_1,...,x_n]_d$, the number
	\begin{equation*}
		r=\min\left\{k~\middle|~f=\sum_{i=1}^k c_i (l_i)^d,~\text{ where }~l_i \in \mathbb{K}[x_1,...,x_n]_1 , ~c_i \in \mathbb{K} \text{ for each }i \in [k]\right\}
	\end{equation*}
	is called the \emph{Waring rank} (or \emph{symmetric rank}) of $f$, and denoted by $\operatorname{Wrank}(f)$.
\end{dfn}

\begin{dfn}[Chow rank]
	For $0 \neq f \in \mathbb{K}[x_1,...,x_n]_d$, the number
	\begin{equation*}
		r=\min\left\{k~\middle|~f=\sum_{i=1}^k l_{i,1}l_{i,2}\cdots l_{i,d} ~\text{ where }~l_{i,j} \in \mathbb{K}[x_1,...,x_n]_1 \text{ for each } i \in [k] \text{ and } j \in [d]\right\}
	\end{equation*}
	is called the \emph{Chow rank} (or the \emph{product rank}) of $f$, and denoted by $\operatorname{Crank}(f)$.
\end{dfn}

Note that the above procedure can be made for an arbitrary tensor $\mathcal{T} \in V_1 \otimes \cdots \otimes V_d$ which does not have to be symmetric. Indeed, we can associate $\mathcal T$ as a homogeneous polynomial of degree $d$ in $\dim (V_1) \cdots \dim (V_d)$ variables (we still denote by $\mathcal T$ for this homogeneous polynomial) and consider its Waring rank or Chow rank. The following relations between the tensor rank of $\mathcal T$ (as a tensor of order $d$) and the Waring/Chow rank of $\mathcal T$ (as a homogeneous polynomial of degree $d$) are well understood in the case when $\operatorname{char}(\mathbb{K})=0$ or $\operatorname{char}(\mathbb{K}) > d$, see \cite{houston2023new, ilten2016product} for more details. 

\begin{equation}\label{ChowIneq}
	\operatorname{Crank}(\mathcal{T}) \leq \operatorname{rank}(\mathcal{T})
\end{equation}
and
\begin{equation}\label{WaringIneq}	
\operatorname{Wrank}(\mathcal{T}) \leq 2^{d-1}\cdot  \operatorname{rank}(\mathcal{T})
\end{equation}
We give small remarks on these inequalities. If a given tensor $\mathcal{T} \in V_1 \otimes \cdots \otimes V_d$ admits a decomposition 
\begin{equation}\label{TensorDecomp}
	\mathcal{T}=\sum_{i=1}^r v_{i,1} \otimes v_{i,2} \otimes \cdots \otimes v_{i,d}
\end{equation}
as a sum of decomposable tensors, then the associated homogeneous polynomial $\mathcal{T}$ also has a decomposition of the form
\begin{equation}\label{ChowDecomp}
	\mathcal{T}=\sum_{i=1}^r l_{i,1}l_{i,2}\cdots l_{i,d},
\end{equation} 
where each linear form $l_{i,j}$ is derived from the vector $v_{i,j}$ as a linear combination of basis for $V_j$. To be precise, when the vector $v_{i,j}$ can be written as $v_{i,j}=\sum_{k=1}^{m_j} a_{i,k} e_{j,k}$ where $m_j = \dim V_j$ and $\{e_{j,1}, \cdots, e_{j,m_j} \}$ is a basis for $V_j$, then we associate a linear polynomial $l_{i,j} = \sum_{k=1}^{m_j} a_{i,k} x_{j,k}$ in the place of $v_{i,j}$. In particular, the first inequality (\ref{ChowIneq}) is still valid when $0<\operatorname{char}(\mathbb{K}) \le d$.
The inequality (\ref{WaringIneq}) follows from a similar argument as in the above and a result on the  Waring rank of monomials \cite{ranestad2011rank}.

\section{Generalizing the formula using symmetries}\label{Sect:Symmetries}

Our main theorem is strongly inspired by an amount of symmetries on the formula $(\ref{newdet4})$. It is worthwhile to analyze these symmetries and the formula before to proceed. First of all, we forget about the coefficient $\pm \frac{1}{2}$ and concentrate on the indices composing each of the summands. Each term appearing in the formula is consisted of the tensor product of $4$ vectors, so let us say 
	\begin{equation*}
		t_1 \otimes t_2 \otimes t_3 \otimes t_4.
	\end{equation*}
Let $a,b,c,d$ be indices which satisfy the following conditions
	\begin{equation*}
		a,b,c,d \in [4]~\text{ are pairwise distinct, }~a<b~\text{ and }~ c<d.
	\end{equation*}
We can easily observe the following rules on the indices of the components.
	\begin{itemize}
		\item [(\romannumeral1)] If the first component is of the form $t_1=e_a-e_b$, then the second component is of the form $t_2=e_c-e_d$. On the other hand, if the first component is of the form $t_1=e_a+e_b$, then the second component is of the form $t_2=e_c+e_d$.
		\item [(\romannumeral2)] If the first two components are $t_1=e_a-e_b$ and $t_2=e_c-e_d$ (resp. $t_1=e_a+e_b$ and $t_2=e_c+e_d$), then the last two components are $t_3 = e_c+e_d$ and $t_4 = e_a+e_b$ (resp. $t_3 = e_c-e_d$ and $t_4 = e_a-e_b$). Hence, each term has the form
				\begin{equation}\label{form1}
					(e_a-e_b)\otimes (e_c-e_d) \otimes (e_c+e_d)\otimes (e_a+e_b)
				\end{equation}
				or
				\begin{equation}\label{form2}
					(e_a+e_b)\otimes (e_c+e_d) \otimes (e_c-e_d)\otimes (e_a-e_b).
				\end{equation}
		\item [(\romannumeral3)] If the term 
				\begin{equation*}
					(e_a-e_b)\otimes (e_c-e_d) \otimes (e_c+e_d)\otimes (e_a+e_b)
				\end{equation*} 
				appears in the formula, then the term 
				\begin{equation*}
					(e_a+e_b)\otimes (e_c+e_d) \otimes (e_c-e_d)\otimes (e_a-e_b)
				\end{equation*}
				also appears in the formula, and vice versa.
	\end{itemize}
	One can easily check that the formula (\ref{newdet4}) is consisted of all the possible terms (\ref{form1}) and (\ref{form2}) satisfying the conditions above.

	Let us analyze how the signs are determined. We check that the sign for each term of the form (\ref{form1}) is given by the sign of permutation
	\begin{equation*}
		sgn\begin{pmatrix}
  	  1 & 2 & 3 & 4\\
		a & b & c & d
  \end{pmatrix}.
  	\end{equation*}
	 For example, the second term 
	\begin{equation*}
		-\frac{1}{2}(e_1-e_3)\otimes (e_2-e_4)\otimes (e_2+e_4)\otimes (e_1+e_3)
	\end{equation*}
	in the formula (\ref{newdet4}) has the negative sign (= coefficient $-\frac{1}{2}$), and the sign of permutation $sgn\begin{pmatrix}
    1 & 2 & 3 & 4\\
		1 & 3 & 2 & 4
  \end{pmatrix} = -1$ is negative. This rule is quite natural since
  \begin{equation*}
  	sgn\begin{pmatrix}
    1 & 2 & 3 & 4\\
		a & b & c & d
  \end{pmatrix}=sgn\begin{pmatrix}
    1 & 2 & 3 & 4\\
		a & c & d & b
  \end{pmatrix},
  \end{equation*}
  and the right-hand side is exactly the coefficient of the term $e_a \otimes e_c \otimes e_d \otimes e_b$ in the original formula (\ref{Detn}).  
  
  Next, let us consider the sign for the term of the form (\ref{form2}). Recall that we took 
  \begin{equation}\label{TargetToCandi}
  	\dfrac{1}{2} ~ sgn\begin{pmatrix}
    1 & 2 & 3 & 4\\
		a & c & d & b
  \end{pmatrix}~e_a \otimes e_c \otimes e_d \otimes e_b
  \end{equation} 
  from a term in the form (\ref{form1}), and hence, to make it compatible with $\det_4$, we need an extra term same as (\ref{TargetToCandi}) which can only occur from the corresponding term in the form (\ref{form2}) from the rule (iii). Hence, the unique candidate among the terms in (\ref{GeneralFormula}) to have (\ref{TargetToCandi}) is 
  \begin{equation*}
  	\pm~\frac{1}{2}(e_a+e_b)\otimes (e_c+e_d) \otimes (e_c-e_d)\otimes (e_a-e_b),
  \end{equation*}
  and the sign must be
  \begin{equation*}
  	(-1)^2\cdot sgn\begin{pmatrix}
    1 & 2 & 3 & 4\\
		a & c & d & b
  \end{pmatrix}=(-1)^2 \cdot sgn\begin{pmatrix}
    1 & 2 & 3 & 4\\
		a & b & c & d
  \end{pmatrix}.
  \end{equation*}

 We need a correction-term by a power $(-1)^2$, since $e_d$ and $e_b$ appear in the later-half components with the negative sign. This sign convention is also necessary to show that the terms that we do not want indeed vanish. For example, we expand the second term of (\ref{newdet4})
  \begin{equation}\label{FirstTerm}
  	-\frac{1}{2}(e_1-e_3)\otimes (e_2-e_4)\otimes (e_2+e_4)\otimes (e_1+e_3),
  \end{equation}
we see that there is a bad term 
  \begin{equation}\label{vanish1}
  	-\frac{1}{2}e_1 \otimes e_2 \otimes e_4 \otimes e_1
  \end{equation}
  which does not contribute to the determinant tensor $\det_4$. This term is cancelled from the corresponding term (follows from rule (iii) again)
  \begin{equation*}
  	-\frac{1}{2}(e_1+e_3)\otimes (e_2+e_4)\otimes (e_2-e_4)\otimes (e_1-e_3),
  \end{equation*}
  whose expansion contains the term
  \begin{equation}\label{vanish2}
  	\frac{1}{2}e_1 \otimes e_2 \otimes e_4 \otimes e_1
  \end{equation}
  so that (\ref{vanish1})$+$(\ref{vanish2})$=0$. 
  
  There are further bad terms which may not be killed by this procedure. For instance, the decomposable tensor (\ref{FirstTerm}) also yields a bad term
  \begin{equation}\label{vanish3}
  		-\frac{1}{2}e_1 \otimes e_2 \otimes e_2 \otimes e_1
  \end{equation}
   which we do not want. To kill it, we need to look for a summand where the places of $e_3$ and $e_4$ are switched, in particular,    
   the third term of (\ref{newdet4})
   \begin{equation*}
   	\frac{1}{2}(e_1-e_4)\otimes (e_2-e_3)\otimes (e_2+e_3)\otimes (e_1+e_4)
   \end{equation*}
   contains the term
   \begin{equation}\label{vanish4}
   	\frac{1}{2}e_1 \otimes e_2 \otimes e_2 \otimes e_1
   \end{equation}
   so that (\ref{vanish3})$+$(\ref{vanish4})$=0$.

The sign convention above is significant when we generalize this formula for $\det_4$ to a general formula for $\det_n$. For instance, let us consider the case $n=6$ and expect a formula satisfying the above rules.
We take the index set $I_6$ as
\begin{equation*}
\Scale[0.9]{
	I_6=\{(i_1,j_1,i_2,j_2,i_3,j_3) \in \mathbb{Z}^6~|~i_1,j_1,i_2,j_2,i_3,j_3 \in [6] \text{ are pairwise different, and } i_{p}<j_{p} \text{ for each } 1 \leq p \leq 3\}.
	}
\end{equation*}
Let 
\begin{equation*}
	S_{i_1,j_1,i_2,j_2,i_3,j_3}=sgn\begin{pmatrix}
		1 & 2 & 3 & 4 & 5 & 6\\
		i_1 & j_1 & i_2 & j_2 & i_3 & j_3
	\end{pmatrix}.
\end{equation*}
Our observation leads to a formula
\begin{equation}\label{newdet6}
\Scale[0.9]{
		\begin{aligned}
			&\frac{1}{2}\sum_{I_{6}}S_{i_1,j_1,i_2,j_2,i_3,j_3}(e_{i_1}-e_{j_1})\otimes (e_{i_2}-e_{j_2})\otimes (e_{i_3}-e_{j_3})\otimes (e_{i_3}+e_{j_3}) \otimes  (e_{i_2}+e_{j_2})\otimes (e_{i_1}+e_{j_1})\\
		&+\frac{1}{2}\sum_{I_{6}}(-1)^3S_{i_1,j_1,i_2,j_2,i_3,j_3}(e_{i_1}+e_{j_1})\otimes (e_{i_2}+e_{j_2})\otimes (e_{i_3}+e_{j_3})\otimes (e_{i_3}-e_{j_3}) \otimes  (e_{i_2}-e_{j_2})\otimes (e_{i_1}-e_{j_1})
		\end{aligned}
		}
	\end{equation}
consisted of $180$ decomposable tensors which coincides with $det_6$. Note that the second line of (\ref{newdet6}), we need to multiply $(-1)^3 = (-1)^{6/2}$ on each of the term. Let us have a brief look why the sign $(-1)^3$ is necessary. For a fixed $(i_1,j_1,i_2,j_2,i_3,j_3) \in I_6$, we take the following term determined by this (multi-)index
\begin{equation*}
	\frac{1}{2}S_{i_1,j_1,i_2,j_2,i_3,j_3}(e_{i_1}-e_{j_1})\otimes (e_{i_2}-e_{j_2})\otimes (e_{i_3}-e_{j_3})\otimes (e_{i_3}+e_{j_3}) \otimes  (e_{i_2}+e_{j_2})\otimes (e_{i_1}+e_{j_1})
\end{equation*}
from (\ref{newdet6}). We expand this term and focus on the summand
\begin{equation}\label{Exdet6_1}
	\frac{1}{2}S_{i_1,j_1,i_2,j_2,i_3,j_3}e_{i_1} \otimes (-e_{j_2}) \otimes e_{i_3} \otimes e_{j_3} \otimes e_{i_2} \otimes e_{j_1}
\end{equation}
which will contribute to $\det_6$. The unique candidate which contains this summand is
\begin{equation*}
	\frac{1}{2}(-1)^3S_{i_1,j_1,i_2,j_2,i_3,j_3}(e_{i_1}+e_{j_1})\otimes (e_{i_2}+e_{j_2})\otimes (e_{i_3}+e_{j_3})\otimes (e_{i_3}-e_{j_3}) \otimes  (e_{i_2}-e_{j_2})\otimes (e_{i_1}-e_{j_1})
\end{equation*}
that appears on the second line of (\ref{newdet6}). We extract the summand
\begin{equation}\label{Exdet6_2}
	\frac{1}{2}S_{i_1,j_1,i_2,j_2,i_3,j_3}(-1)^3e_{i_1} \otimes e_{j_2} \otimes e_{i_3} \otimes (-e_{j_3}) \otimes e_{i_2} \otimes (-e_{j_1})
\end{equation}
which is the same summand as (\ref{Exdet6_1}). It is easy to verify that the sign $(-1)^3$ at (\ref{Exdet6_2}) changes the components $e_{j_2}$, $-e_{j_3}$ and $-e_{j_1}$ to $-e_{j_2}$, $e_{j_3}$ and $e_{j_1}$, respectively. Therefore, we have
\begin{equation*}
	(\ref{Exdet6_1})+(\ref{Exdet6_2})=-S_{i_1,j_1,i_2,j_2,i_3,j_3}e_{i_1} \otimes e_{j_2} \otimes e_{i_3} \otimes e_{j_3} \otimes e_{i_2} \otimes e_{j_1}=S_{i_1,j_2,i_3,j_3,i_2,j_1}e_{i_1} \otimes e_{j_2} \otimes e_{i_3} \otimes e_{j_3} \otimes e_{i_2} \otimes e_{j_1}
\end{equation*}
which is compatible with the summand in the Leibniz formula (\ref{Detn}).
The sign $(-1)^k$ in the second line of (\ref{GeneralFormula}) helps us to save the terms, and also assures a number of vanishing of the bad terms which do not contribute to $\det_n$. Motivated by these symmetries and sign conventions, we generalize (\ref{newdet4}), (\ref{newdet6}) and obtain a formula for the arbitrary $2k \times 2k$ determinant tensor $det_{2k}$ for each $k>0$ as in Theorem \ref{MainThm}. 

\begin{proof}[Proof of Theorem \ref{MainThm}]\
	We are going to directly show by definition. This can be done by showing the following two conditions: when we expand our formula (\ref{GeneralFormula}), then
	\begin{itemize}
		\item [(\romannumeral1)] each term contributes to (\ref{Detn}) survives with the coefficient $1$ and the correct sign, and
		\item [(\romannumeral2)] each term which does not belong to (\ref{Detn}) vanishes.
	\end{itemize}
As a result, we will see that our formula (\ref{GeneralFormula}) is identical to $\det_n$ defined via the Leibniz formula (\ref{Detn}).

	\begin{itemize}
		\item [(\romannumeral1)] Consider an arbitrary term 
			\begin{equation}\label{Giventerm}
				S_{a_1,a_2,...,a_{2k}}e_{a_1} \otimes e_{a_2} \otimes \cdots \otimes e_{a_{2k-1}} \otimes e_{a_{2k}}
			\end{equation}
			appears in (\ref{Detn}). Among the terms in the formula (\ref{GeneralFormula}), only two terms
			\begin{equation}\label{Proof1Cand1}
				\frac{1}{2}S_{a_1,a_{2k},a_2,a_{2k-1},...,a_{k},a_{k+1}}(e_{a_1}-e_{a_{2k}}) \otimes \cdots \otimes (e_{a_{k}}-e_{a_{k+1}}) \otimes (e_{a_{k}}+e_{a_{k+1}}) \otimes \cdots  \otimes (e_{a_1}+e_{a_{2k}})
			\end{equation}
			and
			\begin{equation}\label{Proof1Cand2}
				\frac{1}{2}S_{a_1,a_{2k},a_2,a_{2k-1},...,a_{k},a_{k+1}}(-1)^k(e_{a_1}+e_{a_{2k}})  \otimes \cdots \otimes (e_{a_{k}}+e_{a_{k+1}}) \otimes (e_{a_{k}}-e_{a_{k+1}}) \otimes \cdots  \otimes (e_{a_1}-e_{a_{2k}})
			\end{equation}
			contain the term (\ref{Giventerm}). Note that the above expression seems to make sense only when $a_1 < a_{2k}$, $a_{2} < a_{2k-1}$ and so on, however, if it is not the case so that $a_{i}>a_{2k+1-i}$, we switch their positions and rewrite the component $e_{a_{i}} - e_{a_{2k+1-i}}$ as $-(e_{a_{2k+1-i}} - e_{a_i})$  in order to make the index $(a_1, a_2, \cdots, a_{2k}) \in I_{2k}$. This is reasonable since $S_{a_1, \cdots, a_{2k+1-i}, \cdots, a_i, \cdots, a_{2k}} = - S_{a_1, \cdots, a_{i}, \cdots, a_{2k+1-i}, \cdots, a_{2k}}$ the sign changes for this transposition. 
			 In particular, we are always able to take both (\ref{Proof1Cand1}) and (\ref{Proof1Cand2}) appear as summands in (\ref{GeneralFormula}). It is easy to check that both (\ref{Proof1Cand1}) and (\ref{Proof1Cand2}) yield the summand
			\begin{equation*}
				\frac{1}{2}S_{a_1,a_2,...,a_{2k}}e_{a_1} \otimes e_{a_2} \otimes \cdots \otimes e_{a_{2k-1}} \otimes e_{a_{2k}}.
			\end{equation*}
			when we expand them. Their sum is exactly the one we wanted, and no other terms in the (\ref{GeneralFormula}) can contribute to this summand.
						
		\item [(\romannumeral2)] For each index $(i_1,j_1,i_2,j_2....,i_k,j_k) \in I_{2k}$, we couple the two terms
			\begin{equation}\label{detminus}
				\frac{1}{2}S_{i_1,j_1,...,i_k,j_k}(e_{i_1}-e_{j_1})\otimes (e_{i_2}-e_{j_2})\otimes \cdots \otimes (e_{i_k}-e_{j_k})\otimes (e_{i_k}+e_{j_k}) \otimes \cdots \otimes (e_{i_2}+e_{j_2})\otimes (e_{i_1}+e_{j_1}).
			\end{equation}
			and 
			\begin{equation}\label{detplus}
				\frac{1}{2}(-1)^kS_{i_1,j_1,...,i_k,j_k}(e_{i_1}+e_{j_1})\otimes (e_{i_2}+e_{j_2})\otimes \cdots \otimes (e_{i_k}+e_{j_k})\otimes (e_{i_k}-e_{j_k}) \otimes \cdots \otimes (e_{i_2}-e_{j_2})\otimes (e_{i_1}-e_{j_1}).
			\end{equation}
			For a fixed index $(i_1,j_1,i_2,j_2....,i_k,j_k) \in I_{2k}$, there are plenty of bad terms which do not appear in $\det_n$ when we expand (\ref{detminus}). For example, the term $\frac{1}{2}S_{i_1,j_1,i_2,j_2,i_3,j_3}e_{i_1} \otimes e_{i_2} \otimes e_{i_3} \otimes e_{j_3} \otimes e_{j_2} \otimes e_{i_1}$ is not in the formula (\ref{Detn}) since both the first and the last components are the same as $e_{i_1}$.  Indeed, we see that ``bad summands'' have at least one pair of coinciding components. We need to observe that on which term may yield such a bad summand, and which term may kill such a bad summand arises from expanding another term. From our construction of (\ref{GeneralFormula}), it is clear that a bad term cannot contain a triple or bigger tuple of coinciding components -- only a number of pairs can happen.
			
			\begin{itemize}
			\item [Case 1)] Assume that the number of pairs of duplicated components is odd. \\
				: Let $T_1$ be such a summand appears in the expansion of the form (\ref{detminus}), and let $p$ denote the number of pairs of duplicated components in $T_1$. We will show that $-T_1$ occurs from the expansion of (\ref{detplus}) and kill $T_1$. If there is a component of $T_1$ which is used twice, then it will appear at the $t$-th and the $(2k+1-t)$-th component for some $t \in [k]$. The number of components of $T_1$ which are used only once is $2k-2p$ (possibly it can be $0$). Hence, when we expand (\ref{detplus}), we see that there is a summand of the form 
				\begin{equation*}
					(-1)^k(-1)^{k-p} T_1=(-1)^{2k-p} T_1 = -T_1.
				\end{equation*}
				
				since $p$ is odd. Possibly there could be another index $(i_1^{\prime}, j_1^{\prime}, \cdots, i_{k}^{\prime}, j_k^{\prime}) \in I_{2k}$ which yield the same bad summand $T_1$ in the expansion of (\ref{detminus}) corresponding to this new index, however, even in the case it is killed by the term of the form (\ref{detplus}) corresponding to the same index.

			\item [Case 2)] Assume that the number of pairs of duplicated components is even.\\
				: Let $T_2$ be a such summand from (\ref{detminus}), and let $q$ denote the number of pairs of duplicated components in $T_2$. To kill this $T_2$, we need another term than (\ref{detplus}) since the expansion of (\ref{detplus}) corresponding to the same index also contains $T_2$, not $-T_2$.
				
				Let $J$ be the set of the components $e_j$'s which are not used at $T_2$. Clearly $|J|=q >0$ is positive and even, so we may choose two elements $e_r \neq e_l \in J$. The original term $\mathcal T$ determined by (\ref{detminus}) corresponding to the given index cannot have $(e_r-e_l)$ as a component, i.e., it is not of the form $\frac{1}{2} C \otimes (e_r-e_l) \otimes C'$ for any tensors $C$ and $C'$. Then the indices $r, l$ determine the indices $z, w \in [2k]$ so that $\mathcal T$ have both $e_r-e_z$ and $e_l-e_w$ as its components (we switch $r, z$ and/or $l, w$ if necessary), where $r, l, z, w$ are pairwise different. Indeed,
				\begin{equation*}
					\mathcal T = \frac{1}{2}S_{...,r,z,...,l,w,...} C_1 \otimes (e_r-e_z) \otimes C_2 \otimes (e_l-e_w) \otimes C_3
				\end{equation*}
				for some tensors $C_1,C_2$ and $C_3$. 
				It is clear that $e_z$ and $e_w$ are duplicated components in $T_2$. We take another term in (\ref{GeneralFormula}), namely,
				\begin{equation}\label{Proof2took}
					\frac{1}{2}S_{...,l,z,...,r,w,...} C_1 \otimes (e_l-e_z) \otimes C_2 \otimes (e_r-e_w) \otimes C_3.
				\end{equation}
				If we expand (\ref{Proof2took}), then we find $-T_2$ since the sign
				\begin{equation*}
					S_{...,l,z...,r,w,...}=-S_{...,r,z,...,l,w,...}
				\end{equation*}
				is changed. In particular, we find a term, determined by another index, which involves $-T_2$ which eliminates $T_2$ arose from $\mathcal T$. Similarly, a bad term $T_2$ arises from $\mathcal T^{\prime}$ of the form (\ref{detplus}) is also eliminated by another term.
			
		For instance, in the case of $det_6$, a summand
				\begin{equation}\label{Proof2Ex2_given}
					T_2:=\frac{1}{2}S_{1,6,2,5,3,4} ~e_1 \otimes e_2 \otimes e_3 \otimes e_4 \otimes e_2 \otimes e_1
				\end{equation} 
				appears in the expansion of the term 
				\begin{equation}\label{Proof2Ex2_1}
					\frac{1}{2}S_{1,6,2,5,3,4}(e_1-e_6)\otimes (e_2-e_5) \otimes (e_3-e_4)\otimes (e_3+e_4) \otimes (e_2+e_5)\otimes (e_1+e_6).
				\end{equation}
				Here, exactly two components $e_1$ and $e_2$ are used twice in $T_2$. The set of components which are not used in $T_2$ is $J:=\{e_5,e_6\}$. We take the following term from (\ref{GeneralFormula})
				\begin{equation}\label{Proof2Ex2_2}
					\frac{1}{2}S_{1,5,2,6,3,4}(e_1-e_5)\otimes (e_2-e_6) \otimes (e_3-e_4)\otimes (e_3+e_4) \otimes (e_2+e_6)\otimes (e_1+e_5)
				\end{equation}
				so that the indices $5$ and $6$ are interchanged. We have the summand $-T_2$ in the expansion of (\ref{Proof2Ex2_2})
				\begin{equation}\label{ProofEx2_took}
					\begin{aligned}
						&\frac{1}{2}S_{1,5,2,6,3,4}~e_1 \otimes e_2 \otimes e_3 \otimes e_4 \otimes e_2 \otimes e_1\\
						&=-\frac{1}{2}S_{1,6,2,5,3,4}~e_1 \otimes e_2 \otimes e_3 \otimes e_4 \otimes e_2 \otimes e_1\\
						&=-T_2,
					\end{aligned}
				\end{equation}
				as desired.
			\end{itemize}
	\end{itemize}
\end{proof}

\begin{cor}
\begin{equation*}
	\operatorname{Crank}(det_n) \leq \operatorname{rank}(det_n) \leq \frac{n!}{2^{\lfloor(n-2)/2 \rfloor}}~ \text{ if } ~char(\mathbb{K}) \neq 2
\end{equation*}
\end{cor}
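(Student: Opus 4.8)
The plan is to derive the corollary directly from Theorem \ref{MainThm} together with the inequality (\ref{ChowIneq}), treating the even and odd cases separately. First I would handle the even case $n=2k$: the formula (\ref{GeneralFormula}) expresses $det_n$ as a sum of two families of decomposable tensors, each family indexed by $I_{2k}$, so the number of summands is $2|I_{2k}|$. The key combinatorial step is to count $|I_{2k}|$. An element of $I_{2k}$ is an ordered sequence of $k$ disjoint pairs $\{i_p,j_p\}$ from $[2k]$ with $i_p<j_p$; equivalently, it is an ordered perfect matching of $[2k]$. The number of (unordered) perfect matchings of $[2k]$ is $(2k)!/(2^k k!)$, and choosing an ordering of the $k$ pairs multiplies by $k!$, giving $|I_{2k}| = (2k)!/2^k = n!/2^{k} = n!/2^{n/2}$. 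Hence $\operatorname{rank}(det_n) \le 2 \cdot n!/2^{n/2} = n!/2^{n/2-1} = n!/2^{(n-2)/2}$, which is exactly $n!/2^{\lfloor (n-2)/2 \rfloor}$ since $n$ is even.

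For the odd case $n=2k+1$, I would reduce to the even case via the generalized Laplace (cofactor) expansion along a row, exactly as Derksen does in \cite{derksen2016nuclear}. Expanding $det_{2k+1}$ along the first factor writes it as a sum over $j \in [2k+1]$ of $e_j \otimes (\text{signed } (2k)\times(2k) \text{ minor tensor})$; each minor is a copy of $det_{2k}$ on a $2k$-element index set, so it admits a decomposition into $2k!/2^{k}$... more precisely into $(2k)!/2^{k-1}$ decomposable tensors by the even case applied with $2k$ in place of $n$. Summing over the $2k+1$ choices of $j$ yields $\operatorname{rank}(det_{2k+1}) \le (2k+1)\cdot (2k)!/2^{k-1} = (2k+1)!/2^{k-1} = n!/2^{(n-3)/2} = n!/2^{\lfloor (n-2)/2\rfloor}$, again matching the stated bound since $\lfloor (n-2)/2 \rfloor = k-1$ when $n=2k+1$. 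Finally, the Chow rank bound $\operatorname{Crank}(det_n) \le \operatorname{rank}(det_n)$ is immediate from (\ref{ChowIneq}); since the displayed decompositions use the coefficient $\tfrac12$, which requires $2$ to be invertible, the hypothesis $\operatorname{char}(\mathbb{K}) \ne 2$ enters exactly here (and is already standing throughout the paper).

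The routine parts are the two counting identities and the bookkeeping of the Laplace expansion; the only place that requires care is verifying that the Laplace expansion genuinely produces a valid tensor decomposition of the claimed size, i.e., that each cofactor tensor is literally an instance of $det_{2k}$ in a relabeled set of dual basis vectors so that Theorem \ref{MainThm} applies verbatim, and that no cancellation or sign mismatch inflates the count. I expect this to be the main (though mild) obstacle: one must check that the row-expansion signs $(-1)^{1+j}$ can be absorbed into the decomposable summands without increasing their number. Once that is confirmed, the corollary follows, and one should also remark that the bound strictly improves Derksen's $\left(5/6\right)^{\lfloor n/3\rfloor}\cdot n!$ for all sufficiently large $n$, as already noted after Theorem \ref{MainThm}.
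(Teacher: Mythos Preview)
Your proposal is correct and follows essentially the same approach as the paper: count the summands in (\ref{GeneralFormula}) for even $n$ (the paper writes this as the product $2\cdot\binom{n}{2}\binom{n-2}{2}\cdots\binom{2}{2}$, which is your $2|I_{2k}|=n!/2^{k-1}$), and then use the Laplace expansion to handle odd $n$. The only difference is cosmetic---you phrase the count via ordered perfect matchings and you make the appeal to (\ref{ChowIneq}) explicit, whereas the paper's proof leaves that inequality implicit in the statement.
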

\begin{proof}
	When $n$ is even, it comes from the formula (\ref{GeneralFormula}) which is the sum of
		\begin{equation*}
		2 \times {n \choose 2}\times {n-2 \choose 2} \times \cdots \times {2 \choose 2}=\frac{n!}{2^{(n-2)/2}}=\frac{n!}{2^{\lfloor(n-2)/2 \rfloor}}
	\end{equation*}
	decomposable tensors. 
	
	When $n$ is odd, note that $det_{n-1}$ is the sum of
	\begin{equation*}
		\frac{(n-1)!}{2^{(n-3)/2}}
	\end{equation*}
	decomposable tensors. Thanks to the Laplace expansion formula, we can represent $det_n$ as a sum of
	\begin{equation*}
		\frac{(n-1)!}{2^{(n-3)/2}} \times n=\frac{n!}{2^{(n-3)/2}}=\frac{n!}{2^{\lfloor(n-2)/2 \rfloor}}
	\end{equation*}
	decomposable tensors as well.
\end{proof}

Very recently, Houston, Goucher, and Johnston reported another explicit formula of $\det_n$\cite{houston2023new} which implies
\begin{equation*}
	\operatorname{rank}(det_n) \leq B_n
\end{equation*}
where $B_n$ is the $n$-th Bell number. We compare this result and our formula: let $C_n = \frac{n!}{2^{\lfloor(n-2)/2 \rfloor}}$ be an upper bound we found

\begin{table}[h!]
\centering
\begin{tabular}{ |c|c|c|c|c|c|c|c|c| } 
\hline 
 $n$ & 2 & 3 & 4 & 5 & 6 & 7 & 8 & $\cdots$ \\ 
 \hline 
 $B_n$ & 2 & 5 & 15 & 52 & 203 & 877 & 4140 & $\cdots$\\ 
 \hline
 $C_n$ & 2 & 6 & \textbf{12} & 60 & \textbf{180} & 1260 & 5040 & $\cdots$\\ 
 \hline
\end{tabular}
\caption{When $n=4,6$, we have $C_n \leq B_n$.}
\label{table1}
\end{table}

We observe that $C_n \leq B_n$ only when $n=4, 6$, however, we expect that our formula (\ref{GeneralFormula}) has a potential to be improved since there are a lot of symmetries inside it. For instance, our formula implies an alternative proof for $\operatorname{Wrank} (\det_4) \le 96$ (see also 
\cite{johns2022improved}), so it might lead to a better upper bound. We also expect that our simple formula gives an effective way to compute the determinant of matrices over various rings and maps on exterior algebras.

\bibliography{Det}
\vspace{0.5cm}

\end{document}